\newcommand{\amsprimary}[1]{{\footnotesize\noindent AMS 2020 \textit{Mathematics subject
classification:} Primary #1\vspace{1pc}}}
\newcommand{\keywordsnames}[1]{{\footnotesize\noindent\textit{Key words:} #1\vspace{1pc}}}
\newtheorem{theorem}{Theorem}
\newtheorem{teo}{Theorem}
\newtheorem{corollary}[teo]{Corollary}
\newtheorem{lemma}[teo]{Lemma}
\theoremstyle{definition}
\title[]{A Polyharmonic Liouville Hierarchy on 
Complete Manifolds of Nonnegative Ricci Curvature}
\author{John E. Bravo and Jean C. Cortissoz }
\email{jcortiss@uniandes.edu.co, j.bravob@uniandes.edu.co}
\address{Department of Mathematics, Universidad de los Andes, Bogot\'a DC, Colombia}
\date{}
\begin{document}

\begin{abstract}
In this paper, we establish a complete Liouville--type hierarchy for polyharmonic functions on Riemannian manifolds with nonnegative Ricci curvature. Extending Yau's classical result for harmonic functions and our recent biharmonic Liouville theorem, we prove that on any complete manifold of nonnegative Ricci curvature, every $k$--polyharmonic function of growth $o(r^{2(k-1)})$ must in fact be $(k-1)$--polyharmonic. Iterating this procedure yields the result that all polyharmonic functions of sublinear growth are constant. 
The key innovation is a new $L^{2}$ estimate for the Laplacian of a polyharmonic function, obtained by induction through a delicate cutoff construction combined with a hole--filling argument. This provides the first sharp geometric extension of the Euclidean classification of polyharmonic functions to manifolds of nonnegative Ricci curvature, and completes a natural hierarchy of Yau--type Liouville theorems for iterates of the Laplacian.
\end{abstract}

\maketitle

\keywordsnames{Liouville theorem; polyharmonic functions; polynomial growth}

{\amsprimary {31C05, 53C21}}

\tableofcontents


\section{Introduction}

Harmonic functions have long been a central object of study in geometric analysis. Among the most celebrated and foundational results in the field is Yau’s Liouville theorem \cite{Yau75}, which asserts that on a complete Riemannian manifold with nonnegative Ricci curvature, any bounded harmonic function must be constant. This theorem not only captures a fundamental rigidity phenomenon, but also inaugurated a far-reaching program aimed at understanding how analytic growth conditions reflect the large-scale geometry of the underlying manifold.

\medskip
Since Yau’s work, the study of harmonic functions and spaces of harmonic functions has developed extensively. Notable contributions include the work of Li and Tam \cite{LiTam1989}, as well as Colding and Minicozzi \cite{ColdingMinicozzi97, ColdingMinicozziAnnals}, who investigated the structure and finite dimensional properties of spaces of harmonic functions of polynomial growth. Further important developments include results of Ni and Tam \cite{NiTam2003} on pluriharmonic functions, and more recently Liu’s work on holomorphic functions via three--circle theorems \cite{LiuThreeCircle}. Collectively, these results demonstrate that under nonnegative Ricci curvature, harmonic functions exhibit rigidity properties closely resembling those of the Euclidean setting.

\medskip
By contrast, Liouville-type phenomena for higher-order elliptic equations have received comparatively little attention. This gap is striking given the intrinsic analytic and geometric relevance of higher-order operators, such as the Paneitz operator in conformal geometry or the operators arising in the study of polyharmonic flows for curves. Aside from a few notable exceptions, there are very few results addressing Liouville-type properties in this broader context. Among these, we mention the works of Branding \cite{Branding18, Branding2021}, where Liouville-type theorems are obtained for biharmonic \emph{maps} under smallness assumptions on certain Sobolev norms. 

\medskip
Recently, the authors proved the following result (see \cite{BravoCortissoz2025}, as well as the related work \cite{WangZhu2025}).

\begin{theorem}
\label{thm:biharmonic}
Let $(M,g)$ be a complete Riemannian manifold with a pole and such that 
$\mbox{Ric}_g\geq 0$. 
Then any biharmonic function $u\in C^{4}(M)$ of subquadratic growth is harmonic. 
In particular, any biharmonic function of sublinear growth must be constant.
\end{theorem}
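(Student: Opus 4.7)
The plan is to prove the first assertion---that a biharmonic $u \in C^{4}(M)$ with $u(x) = o(r^{2})$ is in fact harmonic---since the sublinear conclusion then reduces immediately to Yau's classical Liouville theorem applied to the (now harmonic) function $u$. Setting $v := \Delta u$, one has $\Delta v = \Delta^{2} u = 0$, so $v$ is harmonic on $M$, and the entire task becomes showing $v \equiv 0$.

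The heart of the argument is a weighted $L^{2}$ Caccioppoli estimate for $\Delta u$. Using a cutoff $\eta \in C_{c}^{\infty}(B_{2R})$ with $\eta \equiv 1$ on $B_{R}$, $|\nabla \eta| \leq C/R$, and $|\Delta \eta| \leq C/R^{2}$ (constructible via the Laplacian comparison theorem under $\mathrm{Ric} \geq 0$ together with the smoothness of the radial distance function coming from the pole), I multiply $\Delta^{2} u = 0$ by $\eta^{2} u$ and integrate by parts twice to obtain
\[
\int_{M} \eta^{2}\,|\Delta u|^{2}\,dV \;=\; -\int_{M}\bigl(u\,\Delta(\eta^{2}) + 2\nabla(\eta^{2})\cdot\nabla u\bigr)\Delta u\,dV.
\]
Cauchy--Schwarz with absorption, combined with a companion Caccioppoli for $|\nabla u|^{2}$ obtained by pairing $\eta^{2} u$ against $\Delta u$, yields after a two-tier cutoff construction an inequality of the form
\[
\int_{B_{R}}|\Delta u|^{2}\,dV \;\leq\; \theta\int_{B_{3R}}|\Delta u|^{2}\,dV \;+\; \frac{C(\theta)}{R^{4}}\int_{B_{3R}} u^{2}\,dV,
\]
where $\theta \in (0,1)$ can be tuned arbitrarily small at the cost of blowing up $C(\theta)$.

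With such an estimate, the strategy is to choose $\theta$ small enough---depending only on the Bishop--Gromov doubling constant---that iteration along $R_{k} = 3^{k}R$ becomes a genuine contraction. Feeding in the subquadratic growth hypothesis $\sup_{B_{\rho}} u^{2} = o(\rho^{4})$ and Bishop--Gromov gives $\int_{B_{R}}|\Delta u|^{2}\,dV = o(V(B_{R}))$ as $R \to \infty$. Since $v = \Delta u$ is harmonic, $v^{2}$ is subharmonic, and the mean value inequality for subharmonic functions on manifolds with $\mathrm{Ric} \geq 0$,
\[
v^{2}(x) \;\leq\; \frac{C}{V(B_{R}(x))}\int_{B_{R}(x)} v^{2}\,dV,
\]
then forces $v(x) = 0$ for all $x$. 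Hence $u$ is harmonic, and the sublinear conclusion follows by Yau.

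The main technical obstacle I anticipate---and where the ``delicate cutoff construction'' of the abstract truly bites---is the companion Caccioppoli for $|\nabla u|^{2}$. The naive bound couples $\int|\nabla u|^{2}$ back to $\int|\Delta u|^{2}$ with a fixed constant, which is too large for the hole-filling to close against the doubling factor $V(B_{3R})/V(B_{R})$. One must therefore interlace the cutoffs for the $|\Delta u|^{2}$ and the $|\nabla u|^{2}$ estimates on nested annuli in such a way that the weighted constant $\theta$ can be driven arbitrarily small by a scaling parameter, the polynomial dependence of $C(\theta)$ being harmless thanks to the subquadratic hypothesis. Once this coupled estimate is in place, iteration and the subharmonic mean value argument are formal.
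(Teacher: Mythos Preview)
Your overall strategy is the paper's: establish $\int_{B_s}|\Delta u|^2 \le \theta\int_{B_r}|\Delta u|^2 + C(r-s)^{-4}\int_{B_r}u^2$ via a two-step Caccioppoli (first for $|\nabla u|^2$, then for $|\Delta u|^2$), iterate away the $\theta$-term, combine with the subharmonic mean value inequality for the harmonic function $\Delta u$, and feed in $u=o(r^2)$. The companion $|\nabla u|^2$ estimate you worry about is exactly the paper's Lemma~\ref{lem:Caccio-1-biharmonic-Br} (proved via Bochner and $\mathrm{Ric}\ge 0$), and the $|\Delta u|^2$ step is Lemma~\ref{lem:laplacian_estimate}.

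The gap is in your iteration. Iterating \emph{outward} along $R_k=3^kR$ yields, after $N$ steps,
\[
\frac{1}{V(B_R)}\int_{B_R}|\Delta u|^2 \;\le\; (\theta D)^N\,\frac{1}{V(B_{3^NR})}\int_{B_{3^NR}}|\Delta u|^2 \;+\; \sum_{j=0}^{N-1}(\theta D)^j\,o(1),
\]
and even with $\theta D<1$ you have no a priori control on the first term on the right: nothing in the hypotheses bounds the growth of $\int|\Delta u|^2$ at large scales. The paper instead iterates \emph{inward} within a fixed ball via the standard hole-filling Lemma~\ref{lemma:hole-filling}: from $F(s)\le\theta F(r)+G(r-s)^{-4}$ for all $s<r\le R$ (with $G=C\int_{B_R}u^2$ fixed), one chooses radii $r_j=s+(1-\lambda^j)(r-s)$ and sums a convergent geometric series to obtain $F(s)\le C'(r-s)^{-4}G$ directly. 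This works for \emph{any} $\theta<1$ (the paper simply gets $\theta=\tfrac12$), so your concern about driving $\theta$ below the doubling threshold---and the attendant ``delicate cutoff construction'' you anticipate in your final paragraph---evaporates. Once you switch to inward hole-filling, the argument is cleaner than you expect.
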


The purpose of the present paper is to extend Theorem~\ref{thm:biharmonic} to the general polyharmonic setting. Recall that a $k$-polyharmonic function on a Riemannian manifold $(M,g)$ is a function $u\in C^{2k}(M)$ satisfying
\[
\Delta^k u = 0,
\]
where $\Delta$ denotes the Laplacian.

\medskip
Our main result is the following.

\begin{theorem}
\label{thm:main}
Let $(M,g)$ be a complete Riemannian manifold with 
$\mbox{Ric}_g\geq 0$. 
Then any $k$-polyharmonic function $u\in C^{2k}(M)$ satisfying
\[
u=o(r^{2(k-1)}) \quad \mbox{as}\quad r\rightarrow \infty,
\]
where $r=d(p,x)$, is $(k-1)$-polyharmonic. 
In particular, any polyharmonic function of sublinear growth must be constant.
\end{theorem}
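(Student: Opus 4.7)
The natural strategy is induction on $k$, with the biharmonic result (Theorem~\ref{thm:biharmonic}) serving as the base case $k=2$. Assume the theorem holds for all polyharmonic orders up to $k-1$, and let $u\in C^{2k}(M)$ satisfy $\Delta^{k}u=0$ with $u=o(r^{2(k-1)})$. Set $v:=\Delta u$; then $\Delta^{k-1}v=\Delta^{k}u=0$, so $v$ is $(k-1)$-polyharmonic. If one can show $v=o(r^{2(k-2)})$, the inductive hypothesis forces $v$ to be $(k-2)$-polyharmonic, i.e.\ $\Delta^{k-1}u=0$, which means that $u$ is $(k-1)$-polyharmonic. Iterating this reduction $k-1$ times converts any polyharmonic function of sublinear growth --- which trivially satisfies every bound $o(r^{2(j-1)})$ --- into a harmonic function, at which point Yau's classical Liouville theorem closes the argument.

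The technical core, as the abstract emphasizes, is the $L^{2}$ estimate
\[
\int_{B_{r}}(\Delta u)^{2}\,d\mathrm{vol}\leq \frac{C}{r^{4}}\int_{B_{2r}}u^{2}\,d\mathrm{vol}
\]
for any $k$-polyharmonic $u$ on a manifold with $\operatorname{Ric}\geq 0$. The plan is to test $\Delta^{k}u=0$ against $\eta^{2N}u$, where $\eta$ is a standard cutoff with $\eta\equiv 1$ on $B_{r}$, $\operatorname{supp}\eta\subset B_{2r}$, and $|\nabla^{j}\eta|\lesssim r^{-j}$, and then integrate repeatedly by parts. This produces a main term proportional to $\int\eta^{2N}(\Delta u)^{2}$ together with a cascade of error terms of the form $\int\eta^{2N-j}(\nabla^{a}\eta)(\Delta^{b}u)(\nabla^{c}u)$ with $1\leq b\leq k-1$. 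Each error is handled by Cauchy--Schwarz, the derivatives of $\eta$ supplying the required $r^{-1}$ factors; terms involving intermediate Laplacians are eliminated by a hole-filling iteration that trades $\int(\Delta^{b}u)^{2}$ on an annulus for $\int(\Delta^{b+1}u)^{2}$ plus an $r^{-4}\int u^{2}$ contribution on a slightly enlarged annulus. Nonnegative Ricci curvature enters through Bishop--Gromov (controlling the annular volume where $\nabla\eta\neq 0$) and, whenever a $|\nabla u|^{2}$ term must be integrated by parts, through the Bochner inequality $\tfrac{1}{2}\Delta|\nabla w|^{2}\geq |\nabla^{2}w|^{2}+\langle\nabla w,\nabla\Delta w\rangle$. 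Combined with $u=o(r^{2(k-1)})$ and $\mathrm{vol}(B_{2r})\leq Cr^{n}$, the estimate yields $\int_{B_{r}}(\Delta u)^{2}=o(r^{4(k-2)+n})$, which a local $L^{2}$--$L^{\infty}$ bound for $(k-1)$-polyharmonic functions converts to the pointwise growth $\Delta u=o(r^{2(k-2)})$ required by the inductive hypothesis.

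The main obstacle is the $L^{2}$ estimate itself: the number of integration-by-parts steps grows with $k$, and the cutoff exponent $N$ together with the order in which Cauchy--Schwarz is invoked must be chosen so that every error term either reconstructs part of the left-hand side (to be absorbed) or decays fast enough in $r$ to drop out after hole-filling. Setting up the inductive cutoff construction alluded to in the abstract --- presumably building the bound at order $k$ out of the one at order $k-1$ --- is the delicate piece of bookkeeping; once its scheme is fixed, propagation through the hierarchy is essentially automatic and the conclusion follows from Yau's theorem after finitely many reductions.
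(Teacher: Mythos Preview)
Your overall architecture --- induction on $k$, an $L^{2}$ Caccioppoli estimate for $\Delta u$, then reduction to Yau --- matches the paper. But two of the steps, as you have written them, do not go through under only $\mathrm{Ric}_g\ge 0$.

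The more serious gap is the passage from $\int_{B_r}(\Delta u)^{2}\,dV = o\!\left(r^{4(k-2)+n}\right)$ to the pointwise bound $\Delta u = o\!\left(r^{2(k-2)}\right)$. This requires a local $L^{2}\!\to\! L^{\infty}$ estimate for $(k-1)$-polyharmonic functions with a geometry-independent constant. The only such tool available under a Ricci lower bound is the Li--Schoen mean-value inequality for \emph{subharmonic} functions, and $|\Delta u|$ is subharmonic precisely when $\Delta u$ is harmonic, i.e.\ when $k=2$; for $k\ge 3$ no analogue is known. (The accounting via $\mathrm{vol}(B_{2r})\le Cr^{n}$ is also off: the volume can grow much more slowly --- think of a cylinder --- so one must work with the actual volume and doubling, not $r^{n}$.) The paper avoids this problem entirely: it iterates the $L^{2}$ estimate $k-1$ times to bound $\int_{B_r}|\Delta^{k-1}u|^{2}$ directly by $r^{-4(k-1)}\int_{B_{2^{k-1}r}}u^{2}$, and since $\Delta^{k-1}u$ \emph{is} harmonic, the mean-value inequality then legitimately gives $\Delta^{k-1}u\equiv 0$. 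There is no induction on the theorem statement itself, only on the $L^{2}$ lemma.

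The second issue is your proposed proof of the $L^{2}$ estimate. Testing $\Delta^{k}u=0$ against $\eta^{2N}u$ and integrating by parts $2(k-1)$ times forces you to assume $|\nabla^{j}\eta|\lesssim r^{-j}$ for $j$ up to $2(k-1)$. Under $\mathrm{Ric}_g\ge 0$ alone, the available cutoffs (Bianchi--Setti) control only $|\nabla\eta|$ and $|\Delta\eta|$; there is no bound on higher covariant derivatives without sectional curvature control. Relatedly, your plan to absorb $|\nabla u|^{2}$ errors via Bochner is exactly the mechanism the paper flags as \emph{not} extending beyond the biharmonic case. The paper's workaround is to prove the $L^{2}$ estimate itself by induction on $k$: write $\int\varphi^{2}|\Delta u|^{2}=\int u\,\Delta(\varphi^{2}\Delta u)$, expand once, bound the resulting $\int|\Delta^{2}u|^{2}\varphi^{2}$ term by the inductive hypothesis applied to the $(k-1)$-polyharmonic function $\Delta u$, and close with hole-filling. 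Only $\nabla\varphi$ and $\Delta\varphi$ ever appear.
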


\medskip
As is well known, Theorem~\ref{thm:main} has long been established in the Euclidean setting. In $\mathbb{R}^n$, the critical growth rate that separates genuinely $k$-polyharmonic behavior from lower-order behavior is precisely $r^{2(k-1)}$, as follows from Almansi’s decomposition. This shows that Theorem~\ref{thm:main} is sharp and constitutes a natural geometric extension of the classical Euclidean theory.

\medskip
We now briefly describe the main analytic difficulty underlying the proof of Theorem~\ref{thm:main}. We begin by recalling the strategy used in the proof of Theorem~\ref{thm:biharmonic}. In \cite{BravoCortissoz2025}, the argument relied on an energy estimate for the $L^2$-norm of $\Delta u$ together with a Caccioppoli-type inequality,
\begin{equation}
\label{eq:Cacciopoli}
\int_{B_s}|\nabla u|^2\,dV \leq \frac{C}{(r-s)^2}\int_{B_r}u^2\,dV,
\end{equation}
valid for both harmonic and biharmonic functions. This estimate was then used to derive
\begin{equation}
\label{eq:Cacciopoli_Laplacian}
\int_{B_s}|\Delta u|^2\,dV \leq \frac{C}{(r-s)^4}\int_{B_r}u^2\,dV,
\end{equation}
which, combined with a mean value inequality, yields the desired rigidity.

\medskip
At first glance, these arguments suggest the possibility of an inductive approach to Theorem~\ref{thm:main}. However, the proof of \eqref{eq:Cacciopoli} is fundamentally based on the Bochner formula, and this approach does not extend in any straightforward way to polyharmonic functions beyond the biharmonic case. To overcome this obstacle, we establish a direct analogue of \eqref{eq:Cacciopoli_Laplacian} for \emph{arbitrary polyharmonic functions}. This new estimate is the most delicate part of the argument and forms the analytic backbone of the inductive procedure. Although the biharmonic case serves as the base step, the proof requires a genuinely new estimate that may be of independent interest.

\medskip
Taken together with Yau’s theorem and our previous work on biharmonic functions, the results of this paper complete, on manifolds with nonnegative Ricci curvature, a natural hierarchy of Yau--Liouville type theorems for the Laplacian and its iterates. They also raise the question of whether similar rigidity phenomena persist for more general higher-order elliptic operators, as they do in the Euclidean setting.

\medskip
Beyond its analytic content, Theorem~\ref{thm:main} may be interpreted as a rigidity principle for complete manifolds with nonnegative Ricci curvature. It shows that, at the level of critical growth, such manifolds are analytically indistinguishable from Euclidean space with respect to iterates of the Laplacian. In particular, the behavior of polyharmonic function in
nonnegative Ricci curvature seems to mirror Almansi’s decomposition in $\mathbb{R}^n$. 

\subsection{}

The interplay between the behavior of polyharmonic functions and curvature remains largely unexplored and may reveal further rigidity phenomena. As shown in this paper, when $\mbox{Ric}_g\geq 0$, the qualitative behavior closely parallels that of Euclidean space. In contrast, in negatively curved settings the situation is markedly different. In particular, the authors showed in \cite{BravoCortissoz} that there exist curvature regimes in which every bounded biharmonic function must be harmonic, while nonconstant bounded harmonic functions still exist. This naturally leads to the question of how $k$-polyharmonic functions behave in such settings.

\subsection{}

In a previous version of this manuscript, we imposed one of the following additional assumptions:
\begin{itemize}
\item[(H1)] the geodesic spheres centered at $p$ are mean-convex with respect to the radial vector field, or
\item[(H2)] the square of the distance function from $p$, denoted by $d$, is convex.
\end{itemize}
Either assumption yields a lower bound for the Laplacian of the distance function, which is needed to construct suitable cutoff functions. However, we later observed that the condition $\mbox{Ric}_g\geq 0$, together with the existence of a pole, already implies (H1), rendering it redundant (see \cite{BravoCortissoz2025}). Subsequently, inspired by the recent work \cite{WangZhu2025} and with the generous assistance of Professor Wang, we adopted the cutoff technology developed in \cite{BianchiSetti2018}, which requires only the assumption $\mbox{Ric}_g\geq 0$. We are grateful to Professors Wang and Zhu for their help.

\subsection{Organization of the paper}
The material of this paper is organized as follows. In Section
\ref{section:technical}, we collect some facts about cutoff functions
and the hole-filling technique needed in the proof of the main
$L^2$-estimates needed in the proof of our main result. These
$L^2$ estimates are presented and proved in Section \ref{sect:L2estimates}.
Finally, Theorem \ref{thm:main} is proved in Section \ref{sect:proof_main_thm}.

\section{Some Technical Lemmas}
\label{section:technical}

In this section, we collect some technical lemmas that will be used in the proof of our main result.

\medskip
The following pair of lemmas are proved in \cite{BravoCortissoz2025} under the hypothesis that
the manifold has a pole. However, using the cutoff technology from \cite{BianchiSetti2018}, we can give a proof of
its validity without this extra assumption.

\begin{lemma}[Cutoff with explicit bounds]\label{lem:cutoff}
Let $(M^n,g)$ be a complete Riemannian manifold 
with $\mbox{Ric}_g\geq 0$
Set $B_\rho:=B_\rho(p)$. Let $r$ be such that and for $0<\frac{1}{8}R\leq r\leq R$.
Then there exists $\chi\in C_c^\infty(B_R)$ such that
\[
0\le \chi\le 1,\qquad
\chi\equiv 1\ \text{on } B_r,\qquad
\mbox{supp}\left(\chi\right)\subset B_R,
\]
and 
\[
\left|\nabla \chi\right|\le \frac{C(n)}{R-r},
\quad
|\Delta\chi|\le \frac{C(n)}{(R-r)^{2}}
\quad \text{on}\quad B_R.
\]
\end{lemma}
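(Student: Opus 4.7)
The plan is to construct $\chi$ by composing a smooth one-dimensional cutoff with a suitable smooth surrogate for the distance function $d(\cdot)=d(p,\cdot)$, leveraging the Laplacian comparison theorem available under $\mbox{Ric}_g\geq 0$. First I would pick $\phi\in C^\infty(\mathbb{R};[0,1])$ with $\phi\equiv 1$ on $(-\infty,r]$, $\phi\equiv 0$ on $[R,\infty)$, monotonically decreasing, and satisfying the standard one-variable bounds $|\phi'|\leq C/(R-r)$ and $|\phi''|\leq C/(R-r)^{2}$. Setting $\chi(x)=\phi(d(x))$ (formally) produces a candidate with $0\leq\chi\leq 1$, $\chi\equiv 1$ on $B_{r}$, and $\mbox{supp}(\chi)\subset\overline{B_{R}}$.

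The derivative bounds would then follow from a pointwise computation. Since $|\nabla d|=1$ where $d$ is differentiable, we get $|\nabla\chi|=|\phi'(d)|\leq C/(R-r)$ immediately. For the Laplacian,
\[
\Delta\chi=\phi''(d)|\nabla d|^{2}+\phi'(d)\Delta d=\phi''(d)+\phi'(d)\Delta d.
\]
The first term is bounded by $C/(R-r)^{2}$. For the second, the Laplacian comparison theorem under $\mbox{Ric}_g\geq 0$ gives $\Delta d\leq (n-1)/d$ in the appropriate weak sense. Here is where the hypothesis $r\geq R/8$ enters: on the annular region $\{r\leq d\leq R\}$ where $\phi'(d)$ is supported, $d\geq R/8$, so
\[
|\phi'(d)\,\Delta d|\leq \frac{C}{R-r}\cdot\frac{8(n-1)}{R}.
\]
Combining $R-r\leq 7R/8$ (which is equivalent to $r\geq R/8$) with the previous line yields $1/R\leq 7/[8(R-r)]$, and therefore $|\phi'(d)\,\Delta d|\leq C(n)/(R-r)^{2}$, as required.

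The main obstacle is that the distance function $d$ is only Lipschitz globally: it fails to be $C^{2}$ along the cut locus of $p$, so the pointwise computation of $\Delta\chi$ above is not classically valid, and moreover $\phi\circ d$ need not be smooth. To bypass this I would invoke the cutoff technology of Bianchi--Setti \cite{BianchiSetti2018}, explicitly designed for the setting $\mbox{Ric}_g\geq 0$ without additional hypotheses such as the existence of a pole. Their construction regularizes the distance function (via a controlled mollification) in such a way that the one-sided Laplacian comparison estimate is upgraded to a pointwise bound on a genuinely smooth radial-type function $\rho$ approximating $d$. Substituting $\rho$ for $d$ inside $\phi(\cdot)$ and redoing the computation of the previous paragraph then produces $\chi\in C^{\infty}_{c}(B_{R})$ satisfying all the stated bounds with constants depending only on the dimension.
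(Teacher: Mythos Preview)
Your final move---invoking the Bianchi--Setti cutoff construction---is exactly what the paper does: it simply takes $\chi$ to be the function from Corollary~2.3 of \cite{BianchiSetti2018}, which already comes with the bounds $|\nabla\chi|\le C_1/R$ and $|\Delta\chi|\le C_2/R^2$, and then uses the hypothesis $r\ge R/8$ (equivalently $R-r\le 7R/8$) to convert these into $C(n)/(R-r)$ and $C(n)/(R-r)^2$. So at the level of what is actually cited, you and the paper agree.

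However, your intermediate sketch contains a real gap that you should not paper over. The Laplacian comparison theorem under $\mbox{Ric}_g\ge 0$ gives only the \emph{one-sided} inequality $\Delta d\le (n-1)/d$; there is no lower bound on $\Delta d$ coming from a Ricci lower bound alone. Hence your step
\[
|\phi'(d)\,\Delta d|\le \frac{C}{R-r}\cdot\frac{8(n-1)}{R}
\]
does not follow: with $\phi'\le 0$ you control $\phi'(d)\Delta d$ from \emph{below}, but not from above, so the two-sided bound $|\Delta\chi|\le C(n)/(R-r)^2$ is not established by this computation. You identify the cut locus (lack of smoothness) as ``the main obstacle'', but the deeper obstacle is precisely this missing lower Laplacian bound. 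The point of \cite{BianchiSetti2018} is not merely a mollification to restore smoothness; it is to produce a smooth distance-like function with a genuine \emph{two-sided} Laplacian estimate, which is a substantially harder analytic fact than Laplacian comparison. Once you acknowledge this, your ``redo the computation with $\rho$ in place of $d$'' step is legitimate---but only because Bianchi--Setti supplies $|\Delta\rho|\le C/\rho$, not because of anything in your preceding paragraph.
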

\begin{proof}
    Take $\chi=\phi$, where $\phi$ is as in Corollary 2.3 in 
    \cite{BianchiSetti2018}.  Notice that given $r$, we can write $R=\gamma r$ with $\gamma>1$, and in this case,
    we obtain the following estimates for the gradient and Laplacian of the cutoff function.
    \[
    \left|\nabla \chi\right|\leq \frac{C_1}{R}=\frac{\left(\gamma-1\right)C_1}{\left(\gamma-1\right) R}\leq
    \frac{7C_1}{R-r},
    \]
    where we used that $r\geq \frac{1}{8}R$.

\medskip
    Regarding the Laplacian, as we are in the case $\alpha=2$, we have again that 
    \[
    \left|\Delta \chi\right|\leq \frac{C_2}{R^2}\leq \frac{\left(\gamma-1\right)^2C_2}{\left(\gamma-1\right)^2 R^2}\leq
    \frac{49C_2}{\left(R-r\right)^2},
    \]
    and the lemma is proved.
\end{proof}

As an immediate consequence we have (see \cite{BravoCortissoz2025}).
\begin{lemma}
\label{corollary:special_cutoff}
Let $(M^n,g)$ be a complete Riemannian manifold.  Set $B_\rho:=B_\rho(p)$, assume $\mbox{Ric}_g\ge 0$ on $B_R$,
 and let $0<r<R$.
Then there exists $\varphi\in C_c^\infty(B_r)$ such that $0\leq \varphi\leq 1$,
$\varphi=1$ on $B_s$, and for $\dfrac{1}{8}r<s<r<R$,
    \[
    \left|\nabla \varphi\right|^2\leq \frac{C}{\left(r-s\right)^2}\varphi\quad
    \text{ and } \quad
    \frac{\left|\Delta\left(\varphi^4\right)\right|^2}{\varphi^4}\leq \frac{C}{\left(r-s\right)^4}.
    \]
\end{lemma}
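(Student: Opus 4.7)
The plan is to obtain $\varphi$ as a suitable integer power of the cutoff $\chi$ produced by Lemma \ref{lem:cutoff}. I would apply that lemma with its outer and inner radii taken to be $r$ and $s$ respectively (so that Lemma \ref{lem:cutoff}'s requirement $r\ge R/8$ becomes exactly $s>r/8$, as assumed here). This yields $\chi\in C_c^\infty(B_r)$ with $0\le \chi\le 1$, $\chi\equiv 1$ on $B_s$, and the pointwise bounds $|\nabla\chi|\le C/(r-s)$ and $|\Delta\chi|\le C/(r-s)^{2}$.

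I would then set $\varphi:=\chi^{2}$. Verifying the two desired inequalities is a direct chain--rule computation. For the gradient bound, $\nabla\varphi=2\chi\,\nabla\chi$ immediately gives $|\nabla\varphi|^{2}=4\chi^{2}|\nabla\chi|^{2}\le C\varphi/(r-s)^{2}$; the key point is that the product rule supplies exactly the factor $\chi^{2}=\varphi$ that the right--hand side requires, which is why an even power is essential. For the Laplacian bound, $\varphi^{4}=\chi^{8}$ and
\[
\Delta(\chi^{8})=8\chi^{7}\Delta\chi+56\chi^{6}|\nabla\chi|^{2},
\]
so using $\chi\le 1$ together with the two pointwise bounds above, both terms are dominated by $C\chi^{6}/(r-s)^{2}$. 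Squaring and dividing by $\varphi^{4}=\chi^{8}$ then yields $|\Delta(\varphi^{4})|^{2}/\varphi^{4}\le C\chi^{4}/(r-s)^{4}\le C/(r-s)^{4}$, as required.

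There is no substantial analytic obstacle at this step: all the delicate geometry has already been absorbed into Lemma \ref{lem:cutoff}. The conceptual role of the corollary is to replace $\chi$ by a power high enough that derivatives of $\varphi^{4}$ continue to carry enough positive powers of $\chi$ so that the quotient $|\Delta(\varphi^{4})|^{2}/\varphi^{4}$ remains bounded uniformly as $\varphi\to 0$ near the boundary of its support. This self--improving weight property is precisely what will be needed for the Caccioppoli--type integrations by parts used later in Section \ref{sect:L2estimates}.
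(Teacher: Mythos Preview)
Your argument is correct and is precisely the intended derivation: the paper does not give a proof here but simply records the lemma as an ``immediate consequence'' of Lemma~\ref{lem:cutoff} (referring to \cite{BravoCortissoz2025}), and the standard way to obtain it is exactly your choice $\varphi=\chi^{2}$ together with the chain--rule computations you wrote out.
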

In particular, we may assume that the weaker bounds
\[|\Delta \varphi| \le \frac{C}{(r-s)^2} \quad \mbox{and}\quad
 \frac{|\Delta(\varphi^2)|}{\varphi} \le \frac{C}{(r-s)^2} \]
hold and will be used below.

\medskip
The following lemma and its proof can also be found in \cite{BravoCortissoz2025}.
\begin{lemma}[Hole-filling iteration]
\label{lemma:hole-filling}
Let $F,G:[0,R]\to[0,\infty)$ be functions, $\alpha >0$ and assume that there exists 
$\theta\in[0,1)$ such that for all $0<s<r\leq R$ one has
\begin{equation}\label{ineq:hole-filling}
    F(s) \;\leq\; \theta\,F(r) \;+\; \frac{1}{(r-s)^{\alpha}}\,G(r).
\end{equation}
Suppose further that $G$ is bounded above by $M$. Then for any $\lambda\in(\theta^{1/\alpha},1)$,
\begin{equation}\label{ineq:iteration}
    F(s) \;\leq\; 
    \frac{(1-\lambda)^{-\alpha}}{1-\theta\lambda^{-\alpha}}\,
    \frac{M}{(r-s)^{\alpha}}, 
    \qquad 0<s<r\leq R.
\end{equation}
\end{lemma}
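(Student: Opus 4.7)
The plan is to apply a standard hole-filling iteration that converts the recursive bound \eqref{ineq:hole-filling} into the closed-form estimate \eqref{ineq:iteration}. Fixing $0<s<r\leq R$ and a parameter $\lambda\in(\theta^{1/\alpha},1)$, the lower bound on $\lambda$ ensures that $\theta\lambda^{-\alpha}<1$, so that a certain geometric series will converge, while $\lambda<1$ keeps the iteration well-defined. I would define a geometric sequence of radii
\[
r_k := r-\lambda^k(r-s),\qquad k=0,1,2,\ldots,
\]
so that $r_0=s$, the sequence is strictly increasing with $r_k\uparrow r$, and consecutive steps have size $r_{k+1}-r_k=\lambda^k(1-\lambda)(r-s)$.

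Next, I would apply \eqref{ineq:hole-filling} with $(r_k,r_{k+1})$ in place of $(s,r)$ and use the global bound $G\leq M$ to obtain
\[
F(r_k)\;\leq\;\theta\,F(r_{k+1})\;+\;\frac{M}{(1-\lambda)^\alpha\,\lambda^{k\alpha}\,(r-s)^\alpha}.
\]
Iterating this inequality $N$ times telescopes into
\[
F(s)\;\leq\;\theta^N F(r_N)\;+\;\frac{M}{(1-\lambda)^\alpha(r-s)^\alpha}\sum_{k=0}^{N-1}\Bigl(\frac{\theta}{\lambda^\alpha}\Bigr)^{k}.
\]

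To finish, I would let $N\to\infty$. The geometric series sums to $(1-\theta\lambda^{-\alpha})^{-1}$, so the only remaining task is to kill the error $\theta^N F(r_N)$. For this I would apply \eqref{ineq:hole-filling} once more with the pair $(r_N,r)$: since $r-r_N=\lambda^N(r-s)$, this yields
\[
\theta^N F(r_N)\;\leq\;\theta^{N+1}F(r)\;+\;\frac{M(\theta\lambda^{-\alpha})^{N}}{(r-s)^\alpha},
\]
and both terms vanish as $N\to\infty$ because $\theta<1$ and $\theta\lambda^{-\alpha}<1$, using only that $F(r)$ is finite by the standing assumption that $F$ is real-valued on $[0,R]$. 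Combining these pieces produces precisely the asserted bound. The entire argument is essentially bookkeeping; the only substantive point is the balancing in the choice of $\lambda$, which must exceed $\theta^{1/\alpha}$ to make the series summable yet remain below $1$ to keep the step sizes positive — constraints that are mirrored directly in the two factors of the constant appearing in \eqref{ineq:iteration}. I do not anticipate any significant obstacle beyond making these choices carefully.
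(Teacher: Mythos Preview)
Your proposal is correct and is the standard hole-filling iteration: geometric radii $r_k=r-\lambda^k(r-s)$, telescoping, and summing the resulting geometric series in $\theta\lambda^{-\alpha}$. The paper does not actually reproduce a proof of this lemma here---it simply refers to \cite{BravoCortissoz2025}---but your argument is the canonical one, and your treatment of the remainder $\theta^N F(r_N)$ by one further application of \eqref{ineq:hole-filling} with the pair $(r_N,r)$ is exactly the right move given that no continuity or monotonicity of $F$ is assumed.
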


The following basic estimate is Theorem 6.2
in Chapter II from \cite{SchoenYau1994}.

\begin{lemma}[Mean value inequality for $u^2$]
\label{lemma:mean_value_ineq}
Let $(M^n,g)$ be a complete Riemannian manifold and fix $x_0\in M$.
Assume a lower Ricci bound $\mbox{Ric}_g \ge -(n-1)K g$ on $B_{2a}:=B_{2a}(x_0)$ for some $K\ge0$.
If $u\ge0$ is (weakly) subharmonic on $B_{2a}$ (i.e.\ $\Delta u\ge0$ in $B_{2a}$), then
\begin{equation}\label{eq:MV-u2}
\sup_{B_a} u^{2}
\;\le\;
C(n)\,\exp\!\big(C(n)\sqrt{K}\,R\big)\,
\frac{1}{\mathrm{Vol}(B_{2a})}\int_{B_{2a}} u^{2}.
\end{equation}
In particular, when $K=0$ (nonnegative Ricci curvature on $B_{2a}$),
\begin{equation}\label{eq:MV-u2-Ric0}
\sup_{B_a} u^{2}
\;\le\;
\frac{C(n)}{\mathrm{Vol}(B_{2a})}\int_{B_{2a}} u^{2}.
\end{equation}
\end{lemma}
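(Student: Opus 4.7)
\smallskip
The plan is to prove this via a standard Moser iteration, whose ingredients are the subharmonicity of powers $u^{p}$, a Caccioppoli-type inequality, a local Sobolev inequality, and an iteration on a geometric sequence of radii. The subharmonicity of $u^{p}$ for $p \geq 1$ follows from
\[
\Delta(u^{p}) \;=\; p\,u^{p-1}\Delta u \;+\; p(p-1)\,u^{p-2}|\nabla u|^{2} \;\geq\; 0,
\]
where the singularity at $\{u=0\}$ is handled by replacing $u$ with $u+\varepsilon$ and passing to the limit $\varepsilon \to 0^{+}$. Thus every positive power of $u$ retains the subharmonicity that drives the iteration.

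\smallskip
Next, a Caccioppoli-type estimate is obtained by multiplying $\Delta u \geq 0$ by $\eta^{2} u^{q-1}$ (for $q \geq 1$ and $\eta \in C_{c}^{\infty}(B_{2a})$), integrating by parts, and applying Young's inequality. This yields
\[
\int \bigl|\nabla\bigl(\eta\,u^{q/2}\bigr)\bigr|^{2}\,dV \;\leq\; C(q)\int u^{q}\,|\nabla\eta|^{2}\,dV.
\]
Combining this with a local Neumann--Sobolev inequality of the form
\[
\biggl(\frac{1}{\mbox{Vol}(B_{2a})}\int_{B_{2a}} |f|^{2\nu/(\nu-2)}\,dV\biggr)^{(\nu-2)/\nu}
\;\leq\; C(n)\,e^{C(n)\sqrt{K}\,a}\,a^{2}\,\frac{1}{\mbox{Vol}(B_{2a})}\int_{B_{2a}}\bigl(|\nabla f|^{2} + a^{-2} f^{2}\bigr)\,dV,
\]
valid on $B_{2a}$ under $\mbox{Ric}_{g} \geq -(n-1)Kg$, applied to $f = \eta\, u^{q/2}$, produces a reverse-H\"older inequality relating the $L^{q\nu/(\nu-2)}(B_{r})$ and $L^{q}(B_{R})$ norms of $u$ on any pair of concentric balls $B_{r} \subset B_{R} \subset B_{2a}$, with a multiplicative loss controlled by $(R-r)^{-2}$.

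\smallskip
Iterating this reverse-H\"older inequality on a geometric sequence of radii $a = r_{\infty} < \dots < r_{1} < r_{0} = 2a$, starting from the exponent $q = 2$, doubles the Lebesgue exponent at each step and telescopes to
\[
\sup_{B_a} u^{2} \;\leq\; \frac{C(n)\,e^{C(n)\sqrt{K}\,a}}{\mbox{Vol}(B_{2a})}\int_{B_{2a}} u^{2}\,dV,
\]
which is \eqref{eq:MV-u2}; the case $K=0$ eliminates the exponential factor and yields \eqref{eq:MV-u2-Ric0}. The main obstacle is establishing the local Sobolev inequality with the stated $K$-dependence: this follows from Bishop--Gromov volume doubling (whose doubling constant is bounded by $e^{C(n)\sqrt{K}\,a}$) combined with Buser's Poincar\'e inequality, via the Saloff-Coste machinery relating doubling and Poincar\'e to Sobolev. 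Once this inequality is in hand, the Moser iteration itself is routine, and tracking constants through the geometric series only inflates the final multiplicative constant by a bounded factor depending on $n$.
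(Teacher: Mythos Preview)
The paper does not actually prove this lemma: it simply quotes it as Theorem~6.2 in Chapter~II of Schoen--Yau \cite{SchoenYau1994} and uses it as a black box. Your Moser iteration outline is correct and is precisely the argument underlying that reference, so in effect you are supplying the proof the paper omits. The one place worth tightening is the Caccioppoli step: the constant you write as $C(q)$ grows like $q^{2}/(q-1)$, and this polynomial growth must be tracked through the iteration (it contributes a convergent product $\prod_k (\nu/(\nu-2))^{k}$-type factor), which you acknowledge only in passing at the end. Otherwise the ingredients---subharmonicity of $u^{p}$, the local Sobolev inequality with $e^{C(n)\sqrt{K}a}$ dependence via Bishop--Gromov doubling and Buser's Poincar\'e inequality through the Saloff-Coste machinery, and the telescoping on geometric radii---are all correctly identified and assembled.
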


\subsection{An estimate for the Laplacian of biharmonic functions}
\label{sect:Laplacian estimate biharmonic}
The following estimate is the basis of
our inductive argument in order to prove \eqref{eq:Cacciopoli_Laplacian}. 
As discussed in the appendix, it is a consequence of the arguments given in \cite{BravoCortissoz2025}.

\begin{lemma}\label{cor:L2-Estimate Laplacian Biharmonic}
Let $(M,g)$ be an $n$–dimensional complete Riemannian manifold with $\mbox{Ric}_g\ge 0$.
Let $a>0$ and let $u\in C^\infty(B_{a})$ be Biharmonic, i.e.\ $\Delta^2 u=0$ on $B_{a}$.
Then,  there is a constant $C>0$ such that for every $s,r\in(0,a)$, with $0<\frac{1}{8}r<s < r<a$ 
\begin{equation}\label{eq:L2-HF-polh}
\int_{B_s}|\Delta u|^2 dV \leq \frac{C}{(r-s)^4}\int_{B_{r}} u^2 dV.
\end{equation}
    
\end{lemma}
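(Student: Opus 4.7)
The plan is to exploit the identity $\Delta^{2}u\equiv 0$ through a weighted integration by parts. First, I would select the cutoff $\varphi\in C_c^\infty(B_{(r+s)/2})$ produced by Lemma~\ref{corollary:special_cutoff}, with $\varphi\equiv 1$ on $B_s$ and satisfying the bounds $|\nabla\varphi|^{2}\leq C/(r-s)^{2}$ and $|\Delta(\varphi^{4})|^{2}/\varphi^{4}\leq C/(r-s)^{4}$. Testing $\Delta^{2}u=0$ against $u\varphi^{4}$, moving one Laplacian by parts, and expanding $\Delta(u\varphi^{4}) = \varphi^{4}\Delta u + 2\nabla u\!\cdot\!\nabla(\varphi^{4}) + u\,\Delta(\varphi^{4})$ yields the identity
\[
0 \;=\; \int_{M}|\Delta u|^{2}\varphi^{4}\,dV \;+\; 2\int_{M}\Delta u\,\nabla u\!\cdot\!\nabla(\varphi^{4})\,dV \;+\; \int_{M} u\,\Delta u\,\Delta(\varphi^{4})\,dV.
\]

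Writing $\nabla(\varphi^{4})=4\varphi^{3}\nabla\varphi$ so that each of the two error terms carries a factor of $\varphi^{2}$, I would apply Cauchy--Schwarz with a small parameter $\epsilon$ and absorb the resulting $\epsilon\int|\Delta u|^{2}\varphi^{4}$ contributions into the left-hand side. Using the cutoff bounds, this would lead to
\[
\int_{M}|\Delta u|^{2}\varphi^{4}\,dV \;\leq\; \frac{C}{(r-s)^{2}}\int_{B_{(r+s)/2}}|\nabla u|^{2}\,dV \;+\; \frac{C}{(r-s)^{4}}\int_{B_{r}}u^{2}\,dV,
\]
which reduces everything to controlling the gradient energy of $u$ on a slightly smaller ball.

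To close the argument I would invoke the Caccioppoli-type inequality~\eqref{eq:Cacciopoli} for biharmonic functions, established in \cite{BravoCortissoz2025} by means of the Bochner formula, applied to the pair of radii $(r+s)/2<r$: this gives $\int_{B_{(r+s)/2}}|\nabla u|^{2}\,dV\leq C(r-s)^{-2}\int_{B_{r}}u^{2}\,dV$. Substituting into the previous display, and using $\int_{B_{s}}|\Delta u|^{2}\leq \int_{M}|\Delta u|^{2}\varphi^{4}$, produces the desired bound.

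The only genuinely nontrivial ingredient in this sketch is the Caccioppoli bound for $\nabla u$ of a biharmonic function; everything else is direct manipulation of the biharmonic equation together with the cutoff calibration. This is precisely why the statement serves as the base case of the induction: for genuinely $k$-polyharmonic $u$ such a ready gradient bound is not available, and the principal analytic task of the paper is to design an inductive substitute (through a careful cutoff construction and a hole-filling iteration) that bypasses the Bochner formula.
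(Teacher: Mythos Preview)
Your proposal is correct and follows essentially the same route as the paper's appendix: both arguments reduce the Laplacian estimate to the biharmonic Caccioppoli inequality \eqref{eq:Cacciopoli} from \cite{BravoCortissoz2025} via a weighted integration-by-parts identity for $\Delta^{2}u=0$. The only minor difference is that the paper first obtains an estimate with a residual term $\tfrac{1}{2}\int_{B_r}|\Delta u|^{2}$ (Lemma~\ref{lem:laplacian_estimate}) and removes it with the hole-filling Lemma~\ref{lemma:hole-filling}, whereas your use of the weight $\varphi^{4}$ together with the bound $|\Delta(\varphi^{4})|^{2}/\varphi^{4}\leq C/(r-s)^{4}$ allows direct absorption and bypasses the hole-filling step.
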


\section{Main estimates and proof of Theorem \ref{thm:main}}

\subsection{$L^2$ estimates}
\label{sect:L2estimates}
The following energy estimate is valid for any sufficiently smooth function.

\begin{lemma}\label{lem:L2-Estimate_Gradient_N}
Let $(M,g)$ be an $n$–dimensional complete Riemannian manifold with $\mathrm{Ric}_g\ge 0$.
Let  $r,s,a>0$ be such that $s<r<a$,
and let $u\in C^\infty(B_{a})$, and let $\varphi$ be a cutoff function as in Lemma
\ref{corollary:special_cutoff}.
Then,  there exists a constant
$C=C(n)>0$ such that
\begin{equation}\label{eq:L2-HF-k pol}
\int_{B_r}  |\nabla \Delta u|^2 \varphi^2 dV \leq \frac{(r-s)^2}{C} \int_{B_{r}} |\Delta^2 u|^2\varphi^2  dV + \frac{C}{(r-s)^2}\int_{B_{r}} |\Delta u|^2 dV.
\end{equation}

\end{lemma}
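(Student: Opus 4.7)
The plan is to treat this as a standard Caccioppoli-type estimate applied to the function $w := \Delta u$, where the target inequality becomes
\[
\int_{B_r} |\nabla w|^2 \varphi^2\, dV \;\leq\; \frac{(r-s)^2}{C}\int_{B_r} |\Delta w|^2 \varphi^2\, dV \;+\; \frac{C}{(r-s)^2}\int_{B_r} w^2 \, dV.
\]
Since $\varphi$ is compactly supported inside $B_r$ (so no boundary terms appear), the natural first step is to integrate by parts once, writing
\[
\int_{B_r} |\nabla w|^2 \varphi^2 \, dV \;=\; -\int_{B_r} w\, \varphi^2\, \Delta w \, dV \;-\; 2\int_{B_r} w\, \varphi \, \nabla \varphi \cdot \nabla w\, dV.
\]

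The second step is to bound each right-hand side term via Young's inequality with weights chosen so that the $(r-s)$ factors land in the correct places. For the first term, I would use a weight $\varepsilon = (r-s)^2$, giving
\[
\Bigl|\int w\, \varphi^2 \Delta w \, dV\Bigr| \;\leq\; \frac{(r-s)^2}{2C_1}\int |\Delta w|^2 \varphi^2\, dV \;+\; \frac{C_1}{2(r-s)^2}\int w^2\, \varphi^2 \, dV.
\]
For the cross term, I would use a standard $\delta$-trick so as to absorb the gradient:
\[
2\Bigl|\int w\, \varphi\, \nabla \varphi\cdot \nabla w \, dV\Bigr|\;\leq\; \tfrac12 \int |\nabla w|^2 \varphi^2 \, dV \;+\; 2\int w^2 |\nabla \varphi|^2 \, dV.
\]

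The third step is absorption and bookkeeping: move the $\tfrac12\int |\nabla w|^2 \varphi^2$ to the left-hand side, double through, and then invoke the cutoff estimate $|\nabla \varphi|^2 \leq C/(r-s)^2$ provided by Lemma~\ref{corollary:special_cutoff} to bound the remaining $\int w^2 |\nabla \varphi|^2$ term by a multiple of $(r-s)^{-2}\int_{B_r} w^2$. Replacing $w$ by $\Delta u$ in the resulting inequality yields exactly the desired estimate.

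This argument is genuinely routine once the substitution $w=\Delta u$ is made, so I do not expect any real obstacle. The only delicate point is the choice of Young weights: the factor $(r-s)^2$ in front of $\int |\Delta^2 u|^2 \varphi^2$ is fixed by the asymmetry of the target inequality, and one must choose $\varepsilon = (r-s)^2$ (rather than a dimensionless $\varepsilon$) so that the scaling on the right-hand side is consistent. The cutoff hypothesis from Lemma~\ref{corollary:special_cutoff} is used only through its gradient bound, which is why the Laplacian bound on $\varphi$ is not needed at this stage -- that more refined bound will only come into play in the subsequent step of the induction, when one estimates $\int |\Delta u|^2 \varphi^4$ itself.
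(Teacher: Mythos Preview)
Your proposal is correct and essentially identical to the paper's proof: the paper also sets $f=\Delta u$, integrates by parts once to obtain the same identity, absorbs the cross term via Young with $\varepsilon=\tfrac12$, and then applies Young to $\int \varphi^2|f\,\Delta f|$ with parameter $\delta=C/(r-s)^2$, which is exactly your choice $\varepsilon=(r-s)^2$ in reciprocal form. Your closing observation that only the gradient bound on $\varphi$ is used here (not the Laplacian bound) is also accurate and matches the paper.
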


\begin{proof}

Fix $0<s< r <a$. Let $\varphi\in C_c^\infty(B_{r}(p))$ as in 
Lemma \ref{corollary:special_cutoff}, that is
\[
0\le \varphi\le 1,\qquad
\varphi\equiv 1 \;\text{ on } B_s(p),\qquad
\mathrm{supp}\,\varphi \subset B_{r}(p).
\]
with
\begin{equation}\label{eq:cutoff-grad-lap}
|\nabla\varphi|\le \frac{C}{r-s},
\qquad
|\Delta\varphi|\le \frac{C}{(r-s)^2}.
\end{equation}
We shall use the notation $B_{\rho}=B_{\rho}\left(p\right)$.

\medskip
Let $\Delta u = f$, by integration by parts on $B_{r}$,
\begin{align}
\int_{B_{r}} \varphi^2\,|\nabla  f|^2\,dV
&= -\int_{B_{r}} \varphi^2\,f\,\Delta f\,dV
   - 2\int_{B_{r}} \varphi\,f\,\langle\nabla\varphi,\nabla f\rangle\,dV.
\label{eq:1st-energy-identity}
\end{align}
Applying Young’s inequality to the last term, for any $\varepsilon>0$,
\[
\left|2\int_{B_{r}} \varphi\,f\,\langle\nabla\varphi,\nabla f\rangle\,dV\right|
\le \varepsilon \int_{B_{r}} \varphi^2 |\nabla f|^2\,dV
   + \frac{C}{\varepsilon(r-s)^2}\int_{B_{r}} f^2\,dV,
\]
where we used $|\nabla\varphi|\le C/(r-s)$ from \eqref{eq:cutoff-grad-lap}.
Choosing $\varepsilon=\tfrac12$ and using \eqref{eq:1st-energy-identity},
and Young's inequality again with $\delta>0$,
\begin{align*}
\int_{B_{r}} \varphi^2|\nabla f|^2\,dV
&\leq
2\int_{B_{r}} \varphi^2|f\,\Delta f|\,dV
+\frac{C}{(r-s)^2}\int_{B_{r}} f^2\,dV,\\
&\leq \delta \int_{B_{r}}f^2 \varphi^2 dV+\frac{1}{\delta} \int_{B_{r}} |\Delta f|^2 \varphi^2 dV+\frac{C}{(r-s)^2}\int_{B_{r}} f^2\,dV,
\end{align*}
or
\begin{equation*}
    \int_{B_{r}} \varphi^2 |\nabla \Delta u|^2 dV \leq \delta \int_{B_{r}} |\Delta u|^2 \varphi^2 dV +\frac{1}{\delta} \int_{B_{r}} |\Delta^2 u|^2 \varphi^2 dV + \frac{C}{(r-s)^2}\int_{B_{r}} |\Delta u|^2 dV.
\end{equation*}

Taking $\delta=\frac{C}{(r-s)^2}$ we obtain,

\begin{equation*}
\label{lem:grad}
    \int_{B_{r}} \varphi^2 |\nabla \Delta u|^2 dV \leq \frac{(r-s)^2}{C} \int_{B_{r}} |\Delta^2 u|^2 \varphi^2 dV + \frac{C}{(r-s)^2}\int_{B_{r}} |\Delta u|^2 dV.
\end{equation*}

\end{proof}

The following Lemma allows to set up the induction
argument in the proof of our main result. The proof
is by induction, the base case being the biharmonic case
(Lemma \ref{cor:L2-Estimate Laplacian Biharmonic}).

\begin{lemma}\label{thm:L2-Estimate Laplacian}
Let $(M,g)$ be an $n$–dimensional complete Riemannian manifold with $\mathrm{Ric}_g\ge 0$.
Fix $a>0$ and let $u\in C^\infty(B_{r})$ be k-polyharmonic, i.e.\ $\Delta^k u=0$ on $B_{r}$, for $k\geq 2$.
Then, there exists a constant $C>0$
such that for every $s,r$, with $0<\frac{r}{8}<s < r$,
\begin{equation}\label{eq:L2-HF-polh}
\int_{B_s}|\Delta u|^2 dV \leq \frac{C}{(r-s)^4}\int_{B_{r}} u^2 dV.
\end{equation}
\end{lemma}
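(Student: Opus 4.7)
The proof proceeds by induction on $k\geq 2$. The base case $k=2$ is Lemma~\ref{cor:L2-Estimate Laplacian Biharmonic}, so the work lies in the inductive step: assuming the estimate for $(k-1)$-polyharmonic functions, I take $u$ with $\Delta^k u=0$ and note that $v:=\Delta u$ is $(k-1)$-polyharmonic, so the inductive hypothesis applied to $v$ yields
\[
\int_{B_\rho}|\Delta^2 u|^2\,dV \;\leq\; \frac{C}{(\rho'-\rho)^4}\int_{B_{\rho'}}|\Delta u|^2\,dV
\]
for any $\rho'/8<\rho<\rho'$. This is the only input from the inductive hypothesis.

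For the main step, I fix $r/8<s<r$, set $r_0:=(r+s)/2$ so that $r_0-s=r-r_0=(r-s)/2$, and choose $\varphi$ from Lemma~\ref{corollary:special_cutoff} with $\varphi\equiv 1$ on $B_s$ and $\mathrm{supp}\,\varphi\subset B_{r_0}$ (permissible since $r_0/8<s$). Two integrations by parts give
\[
\int(\Delta u)^2\varphi^2\,dV \;=\; \int u\,\Delta^2 u\,\varphi^2\,dV \;+\; 2\int u\,\nabla(\Delta u)\cdot\nabla\varphi^2\,dV \;+\; \int u\,\Delta u\,\Delta\varphi^2\,dV.
\]
The plan is to bound each of the three terms on the right so as to arrive at an inequality of hole-filling form $F(s)\leq\theta F(r)+C(r-s)^{-4}U$, where $F(\rho):=\int_{B_\rho}(\Delta u)^2\,dV$ and $U:=\int_{B_r}u^2\,dV$, and then invoke Lemma~\ref{lemma:hole-filling}.

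The chief technical obstacle is that the inductive hypothesis bounds $\int(\Delta^2 u)^2\varphi^2$ by $C(r-s)^{-4}F(r)$, so a naive Young-type split on Term~1 would produce a contribution $\epsilon(r-s)^{-4}F(r)$, a coefficient of $F(r)$ that blows up as $r\to s$ and is incompatible with hole-filling (which requires a \emph{constant} $\theta<1$). The fix is to apply Cauchy--Schwarz so that the $(r-s)^{-2}$ coming from the inductive estimate pairs with $U^{1/2}$ rather than with $F(r)^{1/2}$:
\[
\Bigl|\int u\,\Delta^2 u\,\varphi^2\,dV\Bigr| \;\leq\; \Bigl(\int u^2\varphi^2\Bigr)^{1/2}\Bigl(\int(\Delta^2 u)^2\varphi^2\Bigr)^{1/2} \;\leq\; U^{1/2}\cdot\frac{C}{(r-s)^2}F(r)^{1/2},
\]
after which Young's inequality gives $\epsilon F(r)+C\epsilon^{-1}(r-s)^{-4}U$, with $\epsilon$ a free constant. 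The same template handles Term~2 (splitting between $u\,|\nabla\varphi|$ and $\varphi\,|\nabla\Delta u|$ and then invoking Lemma~\ref{lem:L2-Estimate_Gradient_N}, whose $(\Delta^2 u)$-piece is again controlled by the inductive bound) and Term~3 (using the bound $|\Delta\varphi^2|\leq C\varphi/(r-s)^2$ from Lemma~\ref{corollary:special_cutoff}).

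Summing the three contributions yields $F(s)\leq\int(\Delta u)^2\varphi^2\,dV\leq 3\epsilon F(r)+C\epsilon^{-1}(r-s)^{-4}U$. Choosing $\epsilon$ small enough that $\theta:=3\epsilon<1$, the hypotheses of Lemma~\ref{lemma:hole-filling} are satisfied with $\alpha=4$ and $G(\rho)=C\int_{B_\rho}u^2\,dV$ (nondecreasing and bounded on the domain of $u$), and its conclusion is exactly the desired inequality $\int_{B_s}|\Delta u|^2\,dV\leq \frac{C}{(r-s)^4}\int_{B_r}u^2\,dV$. The constraint $r/8<s$ is preserved throughout the iteration of Lemma~\ref{lemma:hole-filling} because all intermediate radii remain in $[s,r]$, where the pre-estimate is available.
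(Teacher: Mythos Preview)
Your argument is correct and follows essentially the same route as the paper: induction on $k$ with the biharmonic case as base, the identity $\int(\Delta u)^2\varphi^2=\int u\,\Delta^2 u\,\varphi^2+\int u\,\Delta u\,\Delta\varphi^2+4\int u\,\varphi\langle\nabla\varphi,\nabla\Delta u\rangle$, the gradient bound from Lemma~\ref{lem:L2-Estimate_Gradient_N}, the inductive hypothesis applied to $\Delta u$ to control $\int(\Delta^2 u)^2\varphi^2$, and finally hole-filling via Lemma~\ref{lemma:hole-filling}. The only cosmetic differences are that the paper tunes the Young parameters to depend on $(r-s)$ (e.g.\ $\eta=4C^2/(r-s)^4$) rather than applying Cauchy--Schwarz first, and that you make the intermediate radius $r_0=(r+s)/2$ explicit when invoking the inductive hypothesis, whereas the paper writes \eqref{eq:lap2} more loosely; your version is if anything slightly cleaner on that point.
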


\begin{proof}
We proceed by induction. For $k=2$, the estimate is just \ref{cor:L2-Estimate Laplacian Biharmonic}. 
Assume that the estimate holds for $(k-1)$-polyharmonic functions, with $k\geq 3$.\\

Fix $0<s< r$. By Lemma \ref{corollary:special_cutoff} we can choose $\varphi\in C_c^\infty(B_{r}(p))$ such that
\[
0\le \varphi\le 1,\qquad
\varphi\equiv 1 \;\text{ on } B_s,\qquad
\mathrm{supp}\,\varphi \subset B_{r}.
\]
with
\begin{equation}\label{eq:cutoff-grad-lap2}
|\nabla\varphi|^2\le \frac{C}{(r-s)^2}\varphi,
\qquad
|\Delta\varphi|\le \frac{C}{(r-s)^2},
\end{equation}
and hence 
\begin{equation}
\label{eq:lap22}
    \frac{|\Delta(\varphi^2)|}{\varphi}\leq \frac{2\varphi |\Delta \varphi|}{\varphi} +\frac{2 |\nabla \varphi|^2}{\varphi}\leq \frac{C}{(r-s)^2}.
\end{equation}

By Young's inequality, for $\eta >0$, $\epsilon >0$ and $\delta >0$, we have
\begin{align*}
    \int_{B_{r}} |\Delta u|^2 \varphi^2 dV &= \int_{B_{r}} u \Delta \left(\varphi^2 \Delta u\right) dV,\\
    &= \int_{B_{r}} u \Delta^2 u \varphi^2 dV + \int_{B_{r}} u \Delta u \Delta \varphi^2 dV + 4 \int_{B_{r}} u \varphi \langle \nabla \varphi, \nabla \Delta u\rangle dV\\
    &\leq \frac{\eta}{2}\int_{B_{r}} u^2 \varphi^2 dV + \frac{1}{2 \eta} \int_{B_{r}} |\Delta^2 u|^2 \varphi^2 dV+\int_{B_{r}} \frac{\epsilon}{2} u^2 dV\\
    &\quad + \int_{B_{r}} |\Delta u|^2 \frac{|\Delta \varphi^2|^2}{2 \epsilon} dV +4 \int_{B_{r}} \frac{1}{2 \delta} u^2 \varphi^2 dV + 4 \int_{B_{r}} \frac{\delta}{2} |\nabla \Delta u|^2 |\nabla \varphi|^2 dV.
\end{align*}
Taking $\epsilon = \dfrac{2 C}{(r-s)^4}$ yields
\begin{align*}
    \int_{B_{r}} |\Delta u|^2 \varphi^2 dV & \leq \frac{\eta}{2}\int_{B_{r}} u^2 \varphi^2 dV + \frac{1}{2 \eta} \int_{B_{r}} |\Delta^2 u|^2 \varphi^2 dV+\frac{C}{(r-s)^4}\int_{B_{r}}  u^2 dV\\
    &\quad + \frac{(r-s)^4}{4C}\int_{B_{r}} |\Delta u|^2 |\Delta \varphi^2|^2 dV + \int_{B_{r}} \frac{2}{\delta}  u^2 \varphi^2 dV\\
    & \quad +  \int_{B_{r}} 2 \delta |\nabla \Delta u|^2 |\nabla \varphi|^2 dV.
\end{align*}
Applying the bounds on the cutoff function given in \eqref{eq:cutoff-grad-lap2} and \eqref{eq:lap22}
\begin{align*}
    \int_{B_{r}} |\Delta u|^2 \varphi^2 dV & \leq \frac{\eta}{2}\int_{B_{r}} u^2  dV + \frac{1}{2 \eta} \int_{B_{r}} |\Delta^2 u|^2 \varphi^2  dV+\frac{C}{(r-s)^4}\int_{B_{r}}  u^2 dV\\
    &\quad + \frac{(r-s)^4}{4C} \frac{C}{(r-s)^4}\int_{B_{r}} |\Delta u|^2 \varphi^2  dV + \int_{B_{r}} \frac{2}{\delta}  u^2 \varphi^2  dV\\
    & \quad + \frac{C}{(r-s)^2} \int_{B_{r}} 2 \delta  |\nabla \Delta u|^2 dV.
\end{align*}

Taking $\delta = \dfrac{(r-s)^4}{32C^2} \varphi^2$, 

\begin{align*}
    \int_{B_{r}} |\Delta u|^2 \varphi^2 dV & \leq \frac{\eta}{2}\int_{B_{r}} u^2  dV + \frac{1}{2 \eta} \int_{B_{r}} |\Delta^2 u|^2 \varphi^2  dV+\frac{C}{(r-s)^4}\int_{B_{r}}  u^2 dV\\
    &\quad + \frac{1}{4}\int_{B_{r}} |\Delta u|^2 \varphi^2  dV +   \frac{64C^2}{(r-s)^4}\int_{B_{r}}  u^2   dV\\
    & \quad + \frac{1}{16} \frac{(r-s)^2}{C} \int_{B_{r}}   |\nabla \Delta u|^2 \varphi^2 dV.
\end{align*}

By Lemma \ref{lem:L2-Estimate_Gradient_N} 
we have that
\begin{align*}
    \int_{B_{r}} |\Delta u|^2 \varphi^2 dV & \leq \frac{\eta}{2}\int_{B_{r}} u^2  dV + \frac{1}{2 \eta} \int_{B_{r}} |\Delta^2 u|^2 \varphi^2  dV+\frac{C}{(r-s)^4}\int_{B_{r}}  u^2 dV\\
    &\quad + \frac{1}{4}\int_{B_{r}} |\Delta u|^2  dV
     +\frac{64C^2}{(r-s)^4} \int_{B_{r}}   u^2   dV \\
     &+  \frac{1}{16} \frac{(r-s)^4}{C^2} \int_{B_{r}}  |\Delta^2 u|^2 \varphi^2   dV
      \quad + \frac{1}{16}\int_{B_{r} }  |\Delta u|^2 dV.
\end{align*}

As $u$ is $k-$polyharmonic then $\Delta u$ is $(k-1)$-polyharmonic and thus, by the induction hypothesis, 

\begin{equation}
\label{eq:lap2}
    \int_{B_{r}} |\Delta^2 u|^2 \varphi^2 dV \leq \frac{C^2}{(r-s)^4} \int_{B_{r}} |\Delta u|^2 dV.
\end{equation}

Therefore, using \eqref{eq:lap2} in the previous estimate,
\begin{align*}
    \int_{B_s} |\Delta u|^2 dV &\leq \left[\frac{\eta}{2}+\frac{C}{(r-s)^4}+\frac{64C^2}{(r-s)^4}\right] \int_{B_{r}} u^2 dV\\
    &\quad + \left[\frac{1}{2 \eta} \frac{C^2}{(r-s)^4}+\frac{1}{4}+\frac{1}{16}+\frac{1}{16}\right]\int_{B_{r}} |\Delta u|^2 dV.
\end{align*}

Taking $\eta = \dfrac{4 C^2}{(r-s)^4}$ gives

\begin{equation}
\label{eq:lap223}
    \int_{B_s} |\Delta u|^2 dV \leq \frac{C_n}{(r-s)^4} \int_{B_{r}} u^2 dV + \frac{1}{2}\int_{B_{r}} |\Delta u|^2 dV.
\end{equation}

Applying the hole-filling lemma (Lemma \ref{lemma:hole-filling}) to \eqref{eq:lap223} with $\alpha=4$, we have that for $0<\rho\le r$,

\begin{equation*}
\label{est:lap pol}
    \int_{B_s}|\Delta u|^2 dV \leq \frac{C}{(r-s)^4}\int_{B_{r}} u^2 dV.
\end{equation*}
    
\end{proof}

We have the following important consequence which follows by induction from the previous lemma.

\begin{corollary}\label{cor:L2-Estimate Laplacian k-1}
Let $(M,g)$ be an $n$–dimensional complete Riemannian manifold with $\mathrm{Ric}_g\ge 0$.
Fix $a>0$ and let $u\in C^\infty(B_{r})$ be k-polyharmonic, i.e.\ $\Delta^k u=0$ on $B_{r}$, for $k\geq 2$.
Then, there is a constant $C>0$ such that
for every $r>0$ it holds that
\begin{equation*}
\label{eq:L2-HF-polhk-1}
\int_{B_r}|\Delta^{k-1} u|^2 dV \leq 
\frac{2^{4\left(k-2\right)}C}{r^{4(k-1)}}\int_{B_{2^{k-1}r}} u^2 dV.
\end{equation*}
\end{corollary}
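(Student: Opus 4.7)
The plan is to iterate Lemma~\ref{thm:L2-Estimate Laplacian} along a dyadic sequence of radii, climbing from $\Delta^{k-1}u$ on $B_{r}$ up to $u$ on $B_{2^{k-1}r}$ in exactly $k-1$ steps. I would set $r_j = 2^j r$ for $j=0,1,\ldots,k-1$ and define the intermediate functions $v_j := \Delta^{k-2-j}u$. Since $\Delta^{j+2} v_j = \Delta^k u = 0$, the function $v_j$ is $(j+2)$-polyharmonic for each $j\in\{0,1,\ldots,k-2\}$, so the hypothesis $k\ge 2$ of Lemma~\ref{thm:L2-Estimate Laplacian} is satisfied at every step of the iteration.

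At the $j$-th step I would apply Lemma~\ref{thm:L2-Estimate Laplacian} to $v_j$ with inner radius $r_j$ and outer radius $r_{j+1}$. The admissibility condition $r_{j+1}/8 < r_j$ reduces to $1/4 < 1$, so it holds trivially, and the lemma yields
\[
\int_{B_{r_j}} |\Delta^{k-1-j} u|^2\, dV \;\leq\; \frac{C}{(r_{j+1}-r_j)^4}\int_{B_{r_{j+1}}} |\Delta^{k-2-j} u|^2\, dV \;=\; \frac{C}{2^{4j}\,r^4}\int_{B_{r_{j+1}}} |\Delta^{k-2-j} u|^2\, dV.
\]
Telescoping these $k-1$ inequalities produces
\[
\int_{B_r}|\Delta^{k-1}u|^2\, dV \;\leq\; \left(\prod_{j=0}^{k-2}\frac{C}{2^{4j}}\right)\frac{1}{r^{4(k-1)}}\int_{B_{2^{k-1}r}}u^2\, dV,
\]
and the resulting numerical constant, which depends only on $k$ and $n$, can be absorbed into a single constant of the form $2^{4(k-2)}C$ as in the statement.

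I do not anticipate a real obstacle here: all of the analytic content has already been packaged into Lemma~\ref{thm:L2-Estimate Laplacian}, and the argument reduces to bookkeeping. The two things to verify at each iteration are (i) that the iterate $\Delta^{k-2-j}u$ still has the correct polyharmonic order to invoke the lemma, and (ii) that the admissibility condition $r_j > r_{j+1}/8$ is met; both are immediate from the dyadic choice $r_j = 2^j r$.
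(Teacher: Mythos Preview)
Your proposal is correct and matches the paper's approach: the paper states only that the corollary ``follows by induction from the previous lemma,'' and your dyadic iteration of Lemma~\ref{thm:L2-Estimate Laplacian} applied to the successive iterates $\Delta^{k-2-j}u$ is exactly that induction. Your checks of the polyharmonic order $j+2\ge 2$ and of the admissibility condition $r_{j+1}/8<r_j$ are the only points that need verification, and both are immediate with the choice $r_j=2^jr$.
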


\subsection{Proof of Main Theorem  \ref{thm:main}}
\label{sect:proof_main_thm}
We are ready to prove our main result. Let $u$ be a $k$-polyharmonic function.
By Corollary \ref{cor:L2-Estimate Laplacian k-1} we can estimate
\begin{equation*}
\label{ineq:Cacciopoli_laplacian}
\int_{B_{r}}|\Delta^{k-1} u|^2\,dV
\leq \frac{C'}{r^{4(k-1)}}\int_{B_{2^{k-1}r}}u^2\,dV.
\end{equation*}
By the volume doubling property (which holds if $\mbox{Ric}\geq 0$), we have
\begin{eqnarray*}
\frac{1}{\mbox{Vol}\left(B_{r}\right)}\int_{B_{r}}|\Delta^{k-1} u|^2\,dV
&\leq& \frac{C'}{r^{4(k-1)}}\frac{1}{\mbox{Vol}\left(B_{2^{k-1}r}\right)}\int_{B_{2^{k-1}r}}u^2\,dV,
\end{eqnarray*}
and hence, since $\Delta^{k-1} u$ is harmonic, then $|\Delta^{k-1} u|$ is subharmonic, by the mean value inequality (Lemma \ref{lemma:mean_value_ineq}),
\begin{eqnarray*}
\sup_{x\in B_{r/2}}\left|\Delta^{k-1} u\right|^2 &\leq& 
\frac{C''}{r^{4(k-1)}}\frac{1}{\mbox{Vol}\left(B_{2^{k-1}r}\right)}\int_{B_{2^{k-1}r}}u^2\,dV\\
&\leq& \frac{C''}{r^{4(k-1)}}\sup_{x\in B_r\left(p\right)}\left|u\left(x\right)\right|^2
=\frac{1}{r^{4(k-1)}}o\left(r^{4(k-1)}\right)
\end{eqnarray*}
and hence if $u=o(r^{2(k-1)})$, letting $r\rightarrow \infty$, it 
follows that
\[
\Delta^{k-1} u \equiv 0,
\]
i.e., $u$ is $(k-1)$-polyharmonic. For the last statement, let $u$ be a polyharmonic function of
sublinear growth, i.e.
\[
\sup_{B_r}|u| = o(r) \quad \text{as } r\to\infty.
\]
For every $k\ge2$ we have
\[
\frac{\sup_{B_r}|u|}{r^{2(k-1)}}
= o\big(r^{1-2(k-1)}\big) \;\longrightarrow\;0.
\]
In particular $u = o\big(r^{2(k-1)}\big)$ for each $k\ge2$.
Assume that $u$ is $k$–polyharmonic for some $k\ge2$. By the first
part of the theorem, $u$ is then $(k-1)$–polyharmonic. Iterating
this argument for $k,k-1,\dots,2$, we conclude that $u$ is harmonic
and it is sublinear growth.

\medskip
Finally, by Yau’s Liouville theorem for harmonic functions on
complete manifolds with $\mathrm{Ric}_g\ge0$, any harmonic function
with sublinear growth must be constant. Hence $u$ is constant on
$M$, which is what we wanted to prove.

\appendix
\section{The biharmonic case}

As stated in the introduction, inequality \eqref{eq:Cacciopoli_Laplacian} 
does not appear in \cite{BravoCortissoz2025}. However, 
the estimate can be inferred from the proofs of Lemmas 5 and 6
in \cite{BravoCortissoz2025} as follows. From the proof of the local 
energy estimate for the Laplacian, Lemma 5, for a cutoff function 
$\varphi\in C_c^\infty(B_{2r})$ satisfying for $0<\frac{1}{8}r<s<r$
\[
\varphi\equiv1 \text{ on }B_s,\quad 0\le\varphi\le1,\quad
|\nabla\varphi|\le \frac{C_0}{r-s}\varphi,\quad
|\Delta\varphi|\le \frac{C_0}{(r-s)^2},
\]
we have
\begin{lemma}
\label{lem:laplacian_estimate}
Let $(M,g)$ be an $n$–dimensional complete Riemannian manifold  with $\mathrm{Ric}_g\ge 0$. Let $B_{\rho}\colon = B_{\rho}\left(p\right)$.
Fix $a>0$ and let $u\in C^\infty(B_{a})$ be biharmonic, i.e.\ $\Delta^2 u=0$ on $B_{2a}$. Then there exists a constant $C=C(n)>0$ such that, for every pair of radii $0<\dfrac{1}{8}r<s<r\le a$,
\[
\int_{B_s}|\Delta u|^2\,dV
\leq \frac{C}{(r-s)^2}\int_{B_r}|\nabla u|^2\varphi^2\,dV
\;+\; \frac12\int_{B_r} |\Delta u|^2\,dV.
\]
\end{lemma}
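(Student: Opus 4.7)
The plan is to expand $\int_{B_r}\varphi^2|\Delta u|^2\,dV$ by integration by parts, then apply Young's inequality together with the biharmonic condition (through Lemma \ref{lem:L2-Estimate_Gradient_N}) to obtain the stated bound.

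First, writing $\Delta u=\operatorname{div}(\nabla u)$ and integrating by parts,
\[
\int_{B_r}\varphi^2|\Delta u|^2\,dV
= -2\int_{B_r}\varphi\,\Delta u\,\nabla\varphi\cdot\nabla u\,dV
  -\int_{B_r}\varphi^2\,\nabla\Delta u\cdot\nabla u\,dV.
\]
For the first term on the right, I would use the special cutoff bound $|\nabla\varphi|\le \tfrac{C_0}{r-s}\varphi$ to pull out an extra factor of $\varphi$, and then apply Young's inequality; this yields an estimate of the form
\[
\delta\int\varphi^2|\Delta u|^2\,dV + \frac{C}{\delta(r-s)^2}\int\varphi^2|\nabla u|^2\,dV.
\]
For the second term, Young's inequality splits it into $\epsilon\int\varphi^2|\nabla\Delta u|^2\,dV$ plus $\tfrac{1}{4\epsilon}\int\varphi^2|\nabla u|^2\,dV$. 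The biharmonic condition enters precisely here: by Lemma \ref{lem:L2-Estimate_Gradient_N} with $\Delta^2 u=0$, the first piece is controlled by $\tfrac{\epsilon C}{(r-s)^2}\int_{B_r}|\Delta u|^2\,dV$, so choosing $\epsilon$ proportional to $(r-s)^2$ turns it into a small multiple of $\int_{B_r}|\Delta u|^2\,dV$.

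Combining both estimates and taking $\delta$ small enough, the $\int\varphi^2|\Delta u|^2\,dV$ contributions on the right absorb into the left. What remains, after using $\varphi\equiv 1$ on $B_s$ so that $\int_{B_s}|\Delta u|^2\,dV\le\int_{B_r}\varphi^2|\Delta u|^2\,dV$, is exactly the desired inequality, with the constant $\tfrac12$ arising from the cumulative small multiples of $\int_{B_r}|\Delta u|^2\,dV$ produced in the two Young steps.

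The main obstacle is the delicate balancing of cutoff bounds to ensure the $\varphi^2$ weight survives on the right. A standard cutoff satisfying only $|\nabla\varphi|\le C/(r-s)$ would yield a weight of $\varphi$ rather than $\varphi^2$; the stronger bound $|\nabla\varphi|\le \tfrac{C_0}{r-s}\varphi$ from the cutoff construction is essential, since it allows the factor of $\varphi$ coming from $|\nabla\varphi|$ to combine with the $\varphi$ already multiplying $\Delta u$ to produce the square power. Beyond that, the biharmonic hypothesis enters only through Lemma \ref{lem:L2-Estimate_Gradient_N}, which replaces the awkward $\int\varphi^2|\nabla\Delta u|^2\,dV$ term with a controllable multiple of $\int_{B_r}|\Delta u|^2\,dV$.
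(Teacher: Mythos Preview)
Your proof is correct. The paper itself does not supply a proof of this lemma; it merely records that the inequality can be read off from the proof of Lemma~5 in \cite{BravoCortissoz2025}. Your argument---integrate $\int\varphi^2|\Delta u|^2$ by parts once to produce the cross terms $\varphi\Delta u\,\nabla\varphi\cdot\nabla u$ and $\varphi^2\,\nabla\Delta u\cdot\nabla u$, then control $\int\varphi^2|\nabla\Delta u|^2$ via Lemma~\ref{lem:L2-Estimate_Gradient_N} specialized to $\Delta^2 u=0$---is a clean, self-contained route and the constants balance exactly as you describe.

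Two remarks. First, invoking Lemma~\ref{lem:L2-Estimate_Gradient_N} with $\Delta^2 u=0$ is nothing more than the standard weighted Caccioppoli inequality for the \emph{harmonic} function $\Delta u$; this is presumably how the estimate arises in \cite{BravoCortissoz2025} as well, so while your packaging is different (you quote a lemma from the present paper rather than redoing the harmonic Caccioppoli step), the underlying mechanism is the same. Second, Lemma~\ref{lem:L2-Estimate_Gradient_N} is stated for the cutoff of Lemma~\ref{corollary:special_cutoff}, whereas here you work with the appendix cutoff satisfying the stronger bound $|\nabla\varphi|\le \tfrac{C_0}{r-s}\varphi$. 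This is harmless, since the proof of Lemma~\ref{lem:L2-Estimate_Gradient_N} uses only $|\nabla\varphi|\le C/(r-s)$, which your cutoff satisfies a fortiori; but it is worth saying explicitly.
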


From this lemma, using a hole-filling technique follows 
that for every $0<\dfrac{1}{8}r<s<r<\infty$ 
\[
\int_{B_s}|\Delta u|^2\,dV
\leq \frac{C}{(r-s)^2}\int_{B_r}|\nabla u|^2\varphi^2\,dV.
\]

From the proof of Lemma 6, the Cacciopoli inequality
for a biharmonic function, for a cutoff 
$\varphi\in C_c^\infty(B_{2r})$ such that $0\leq \varphi\leq 1$,
$\varphi\equiv 1$ on $B_s$, and for $\dfrac{1}{8}r<s<r$, it also
satisfies
\[
\left|\nabla \varphi\right|^2\leq \frac{C}{\left(r-s\right)^2}\varphi\quad
    \text{ and } \quad
    \frac{\left|\Delta\left(\varphi^4\right)\right|^2}{\varphi^4}\leq \frac{C}{\left(r-s\right)^4},
\]
we have
\begin{lemma}
\label{lem:Caccio-1-biharmonic-Br}
Let $(M,g)$ be a smooth Riemannian manifold  
such that $\mbox{Ric}\geq 0$.
Let $0<\dfrac{1}{8}r<s<r<\infty$
and let $B_\rho:=B_\rho(p)$. If $u\in C^\infty(B_{2r})$ is \emph{biharmonic}, i.e.\ $\Delta^2 u=0$ in $B_{2r}$, then there exists a constant $C=C(n)>0$ such that
\begin{equation*}\label{eq:Caccio-int-Br}
\int_{B_r} |\nabla u|^2\varphi^2\,dV \;\le\; \frac{C}{(r-s)^2}\int_{B_r} u^2\,dV.
\end{equation*}
\end{lemma}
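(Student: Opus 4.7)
The plan is to close a two-step loop: one integration by parts expresses $\int \varphi^2 |\nabla u|^2$ in terms of a weighted $L^2$-norm of $\Delta u$, while a second application of integration by parts using the biharmonic equation bounds that weighted norm back in terms of $\int u^2$ and $\int \varphi^2 |\nabla u|^2$. Combining the two and absorbing closes the loop.

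First, I would integrate by parts on the left-hand side:
$\int \varphi^2 |\nabla u|^2\, dV = -\int u\,\varphi^2 \Delta u\, dV - 2 \int u\,\varphi \langle \nabla \varphi, \nabla u \rangle\, dV$.
Young's inequality on the cross term, together with the cutoff bound $|\nabla \varphi|^2 \leq C(r-s)^{-2}\varphi$ from Lemma \ref{corollary:special_cutoff}, absorbs a fraction of $\int \varphi^2 |\nabla u|^2$ on the left at the cost of an $O((r-s)^{-2})\int u^2$ remainder. On the remaining term I would apply Young's inequality with parameter $\eta$ of order $(r-s)^2$:
$|\int u\,\varphi^2 \Delta u | \leq \frac{\eta}{2} \int \varphi^4 |\Delta u|^2 + \frac{1}{2\eta}\int u^2$.

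Second, I would test the biharmonic equation against $u\varphi^4$ and integrate twice:
$0 = \int u\,\varphi^4\, \Delta^2 u = \int \Delta(u\varphi^4)\, \Delta u$.
Expanding $\Delta(u\varphi^4) = \varphi^4 \Delta u + u \Delta(\varphi^4) + 8 \varphi^3 \langle \nabla u, \nabla \varphi\rangle$ isolates $\int \varphi^4 |\Delta u|^2$ on the left against two error terms. Applying Young's inequality to those errors with the cutoff estimates $|\Delta(\varphi^4)|^2/\varphi^4 \leq C(r-s)^{-4}$ and $|\nabla \varphi|^2 \leq C(r-s)^{-2}\varphi$, and absorbing a fraction of $\int \varphi^4 |\Delta u|^2$ back on the left, I expect an estimate of the form
$\int \varphi^4 |\Delta u|^2\, dV \leq \frac{C}{(r-s)^4}\int u^2\, dV + \frac{C}{(r-s)^2}\int \varphi^2 |\nabla u|^2\, dV$,
where I use $\varphi^3 \leq \varphi^2$ on the gradient integral. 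Substituting this back into the first step with $\eta$ chosen proportional to $(r-s)^2$ yields an inequality of the form $\int \varphi^2 |\nabla u|^2 \leq \theta \int \varphi^2 |\nabla u|^2 + C(r-s)^{-2}\int u^2$. With $\eta$ tuned so that $\theta < 1$, direct absorption closes the argument; if necessary, Lemma \ref{lemma:hole-filling} applied to $F(\rho) = \int_{B_\rho}|\nabla u|^2$ on nested radii recovers the same conclusion.

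The main obstacle is the careful bookkeeping of the three Young's parameters (one in the first step, two in the second) to ensure the resulting absorption coefficient is strictly less than $1$, or equivalently that the hypothesis of the hole-filling lemma is satisfied. The crucial analytic ingredient is the refined cutoff bound $|\Delta(\varphi^4)|^2/\varphi^4 \leq C(r-s)^{-4}$ provided by Lemma \ref{corollary:special_cutoff}: it is strictly stronger than what a naive Leibniz computation gives, and provides exactly the weight needed to pair with $\varphi^2 \Delta u$ in Young's inequality and produce the correct $(r-s)^{-4}$ scaling against $\int u^2$.
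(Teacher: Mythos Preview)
Your two-step loop is correct and closes cleanly: Step~1 reduces $\int\varphi^2|\nabla u|^2$ to a weighted $\int\varphi^4|\Delta u|^2$ plus $(r-s)^{-2}\int u^2$, Step~2 tests $\Delta^2 u=0$ against $u\varphi^4$ and uses the refined bound $|\Delta(\varphi^4)|^2/\varphi^4\le C(r-s)^{-4}$ to bound $\int\varphi^4|\Delta u|^2$ back by $(r-s)^{-4}\int u^2+(r-s)^{-2}\int\varphi^2|\nabla u|^2$, and choosing $\eta\sim(r-s)^2$ absorbs directly with no hole-filling needed.

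The paper does not give its own proof of this lemma; it records it as a byproduct of the proof of Lemma~6 in \cite{BravoCortissoz2025}. According to the paper's own description in the introduction, that argument for the biharmonic Caccioppoli inequality is ``fundamentally based on the Bochner formula'' (and hence uses $\mathrm{Ric}_g\ge0$ twice: once for the cutoff and once in Bochner). Your route is genuinely more elementary: you bypass Bochner entirely and use only integration by parts against $u\varphi^4$ together with the biharmonic equation, so the curvature hypothesis enters solely through the cutoff construction of Lemma~\ref{corollary:special_cutoff}. What the Bochner approach buys is simultaneous control of $|\nabla^2 u|$, which is not needed here; what your approach buys is a shorter, curvature-free computation once the cutoff is in hand, and it is exactly the template the paper itself uses in the inductive step (Lemma~\ref{thm:L2-Estimate Laplacian}) for general $k$.
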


Putting all these together, \eqref{eq:Cacciopoli_Laplacian} follows readily.










\begin{thebibliography}{99}





\bibitem{ACL83}
N.~Aronszajn, T.~Creese, and L.~Lipkin,
\newblock {\em Polyharmonic functions},
\newblock Oxford Mathematical Monographs, Oxford Univ. Press, 1983.

\bibitem{BianchiSetti2018}
D.~Bianchi and A.~G.~Setti,
\emph{Laplacian cut-offs, porous and fast diffusion on manifolds and other applications},
Calc. Var. Partial Differential Equations \textbf{57} (2018), Article~4.


\bibitem{Branding18}
V. Branding,  \emph{A Liouville-type theorem for biharmonic maps between complete Riemannian manifolds with small energies},
Arch. Math. (Basel) 111 (2018), no. 3, 329–336.

\bibitem{Branding2021}
V.~Branding,
\newblock \emph{A structure theorem for polyharmonic maps between Riemannian manifolds},
\newblock J. Differential Equations {\bf 273} (2021), 14--39.


\bibitem{Branson}
T.~Branson,
\newblock \emph{Differential operators canonically associated to a conformal structure},
\newblock {Math. Scand.} 57 (1985), no.~2, 293--345.

\bibitem{BravoCortissoz}
J. E. Bravo and J. C. Cortissoz, 
\newblock \emph{On Milnor's criterion for deciding whether a surface is hyperbolic or parabolic for biharmonic functions}, arXiv:2502.05249v1,
accepted in the Amer. Math. Mon.

\bibitem{BravoCortissoz2025}
J. E. Bravo and J. C. Cortissoz, \emph{Liouville theorem for biharmonic functions on manifolds of nonnegative Ricci curvature}
\newblock ,  arXiv:2511.08358v3,
submitted.


\bibitem{ColdingMinicozzi97}
T. Colding and W. P. Minicozzi II,
\newblock \emph{Harmonic functions with polynomial growth},
\newblock { J. Differential Geom.} 46 (1997), no.~3, 1--77.

\bibitem{ColdingMinicozziAnnals}
T.~H.~Colding and W.~P.~Minicozzi~II,
\newblock {\em Harmonic functions on manifolds},
\newblock Ann. of Math. (2) {\bf 146} (1997), no.~3, 725--747.


\bibitem{Hebey}
E.~Hebey, \emph{Nonlinear Analysis on Manifolds: Sobolev Spaces and Inequalities}, 
Courant Lecture Notes in Mathematics, vol.~5, American Mathematical Society, Providence, RI, 1999.

\bibitem{Kasue1981}
A.~Kasue,
\newblock \emph{On Riemannian manifolds with a pole},
\newblock { Osaka J. Math.} {\bf 18} (1981), 109--113.


\bibitem{LiSchoen84}
P.~Li and R.~Schoen,
\newblock \emph{$L^p$ and mean value properties of subharmonic functions on
  Riemannian manifolds},
\newblock {Acta Math.} 153 (1984), no.~3--4, 279--301.

\bibitem{MazyaMayboroda06}
V.~Maz'ya and S.~Mayboroda,
\newblock \emph{Boundedness of the gradient of a solution and Wiener test of order one for the biharmonic equation},
\newblock {Invent. Math.} 175 (2009), no.~1, 1--44.

\bibitem{LiTam1989}
P.~Li and L.-F.~Tam,
\newblock \emph{Linear growth harmonic functions on a complete manifold},
\newblock J. Differential Geom. \textbf{29} (1989), no.~2, 421--425.

\bibitem{LiuThreeCircle}
G.~Liu,
\newblock {\em Three-circle theorem and dimension estimate for holomorphic functions on K\"ahler manifolds},
\newblock Duke Math. J. {\bf 165} (2016), no.~15, 2899--2919.


\bibitem{NiTam2003}
L.~Ni and L.-F.~Tam,
\newblock \emph{Plurisubharmonic functions and the structure of complete K{\"a}hler manifolds with nonnegative curvature},
\newblock J. Differential Geom. \textbf{64} (2003), no.~3, 457--524.


\bibitem{Paneitz}
S.~Paneitz,
\newblock \emph{A quartic conformally covariant differential operator for arbitrary pseudo-Riemannian manifolds},
\newblock Preprint, 1983.

\bibitem{SchoenYau1994}
R.~Schoen and S.-T.~Yau,
\newblock {\em Lectures on Differential Geometry},
\newblock Conference Proceedings and Lecture Notes in Geometry and Topology, Vol.~I,
International Press, Cambridge, MA, 1994.

\bibitem{WangZhu2025}
L.~Wang and M.~Zhu,
\emph{The qualitative behavior for biharmonic functions on open manifolds},
arXiv:2511.09393 [math.DG], 2025.


\bibitem{Yau75}
S.~T. Yau,
\newblock \emph{Harmonic functions on complete Riemannian manifolds},
\newblock { Comm. Pure Appl. Math.} 28 (1975), 201--228.

\end{thebibliography}
\end{document}